\newtheorem{theorem}{Theorem}[section]
\newtheorem{proposition}[theorem]{Proposition}
\newtheorem{lemma}[theorem]{Lemma}
\theoremstyle{definition}
\numberwithin{equation}{section}
\begin{document}
\baselineskip=15.5pt

\title[Deformation quantization, translation structure and Higgs bundles]{Deformation quantization of moduli spaces 
of Higgs bundles on a Riemann surface with translation structure}

\author[I. Biswas]{Indranil Biswas}

\address{School of Mathematics, Tata Institute of Fundamental
Research, Homi Bhabha Road, Mumbai 400005, India}

\email{indranil@math.tifr.res.in}

\subjclass[2010]{53D55, 14H60, 37J11, 81S10}

\keywords{Deformation quantization, symplectic form, Higgs bundles, translation structure}

\date{}

\begin{abstract}
Let $X$ be a compact connected Riemann surface of genus $g\, \geq\, 1$ equipped with a nonzero
holomorphic $1$-form. Let ${\mathcal M}_X(r)$ denote the moduli space of semistable 
Higgs bundles on $X$ of rank $r$ and degree $r(g-1)+1$; it is a complex symplectic manifold. Using
the translation structure on the open subset of $X$ where the $1$-form does not vanish, we construct a natural
deformation quantization of a certain nonempty Zariski open subset of ${\mathcal M}_X(r)$.
\end{abstract}

\maketitle

\section{Introduction}

Let $X$ be a compact connected Riemann surface of genus $g$, with $g\, \geq\, 1$. Take a holomorphic $1$-form
$$
\beta\,\,\in\,\, H^0(X,\, K_X)\setminus\{0\}\,.
$$
So $\beta$ defines a translation structure on the Zariski open subset
$$
X_0\, :=\, \left\{x\, \in\, X\ \big\vert\ \beta(x)\, \not=\, 0\right\}\, \subset\, X\, .
$$
This means that $X_0$ is covered by a distinguished class of holomorphic coordinate functions such that all the 
transition functions are translations of $\mathbb C$. Consider the total space of the holomorphic cotangent bundle 
$K_{X_0}$ of $X_0$ equipped with the Liouville symplectic form. Using the translation structure on $X_0$, we construct a 
deformation quantization of this symplectic manifold (see Proposition \ref{prop1}); the definition of a deformation 
quantization is recalled in Section \ref{sec2.1} (more details on deformation quantization can be found in 
\cite{BFFLS}, \cite{DWL} and references therein).

It may be mentioned that if $X$ is equipped with a projective structure, then the complement $K_X\setminus \{0_X\}$ 
of the zero section of $K_X$, equipped with the Liouville symplectic form, has a natural deformation quantization 
\cite{BB}.

For any integer $n\, \geq\, 1$, consider
$$
K^n_0\, :=\, \left\{(x_1,\, \cdots,\, x_n)\, \in\, (K_X)^n\,\, \middle|\,\, x_i\, \not=\, x_j\ \ 
\forall\,\ i\, \not=\, j\right\}\, ;
$$
the Liouville symplectic form on $K_X$ produces a holomorphic symplectic form on $K^n_0$,
which is denoted by ${\Omega}'_K$. The above mentioned
deformation quantization of $K_{X_0}$ produces a deformation quantization of the Zariski open subset
$K^n_0\bigcap (K_{X_0})^n\, \subset\, (K_X)^n$ equipped with the symplectic form
${\Omega}'_K\big\vert_{K^n_0\cap (K_{X_0})^n}$.

The symplectic form ${\Omega}'_K$ on $K^n_0$ is preserved under the natural action of the symmetric group 
${\mathbb S}_n$ of permutations of $\{1,\, \cdots,\, n\}$. Since the action of ${\mathbb S}_n$ on $K^n_0$ is free, we 
get a holomorphic symplectic form on the manifold $K^n_0/{\mathbb S}_n$ given by ${\Omega}'_K$,
which is denoted by ${\Omega}''_K$. The 
deformation quantization of $K^n_0\bigcap (K_{X_0})^n$ is also invariant under the natural action of ${\mathbb S}_n$ 
on $K^n_0\bigcap (K_{X_0})^n$. Consequently, it produces a deformation quantization of the symplectic structure
${\Omega}''_K\big\vert_{(K^n_0\cap (K_{X_0})^n)/{\mathbb S}_n}$ on $(K^n_0\bigcap (K_{X_0})^n)/{\mathbb S}_n$.

For any integer $r\, \geq\, 1$, let ${\mathcal M}_X(r)$ denote the moduli space of semistable
Higgs bundles on $X$ of rank $r$ and degree $r(g-1)+1$; it is a smooth complex quasiprojective variety of
dimension $2(r^2(g-1)+1)$ equipped with an algebraic symplectic form. We will denote the symplectic
form on ${\mathcal M}_X(r)$ by $\Omega_M$.

Set $\delta\,=\, r^2(g-1)+1$. The moduli space ${\mathcal M}_X(r)$ is birational to $K^\delta_0/{\mathbb S}_\delta$
\cite{GNR}, \cite{ER}. More precisely, there is a nonempty Zariski open subset
$$\widehat{\mathcal U}_M\, \subset\, {\mathcal M}_X(r)$$ and a nonempty Zariski open subset
$$\widehat{\mathcal U}_S\, \subset\, (K^n_0\cap (K_{X_0})^n)/{\mathbb S}_n$$
together with a natural algebraic isomorphism
$$
\widehat{\Phi}\, \, :\,\,
\widehat{\mathcal U}_M \, \stackrel{\sim}{\longrightarrow}\, \widehat{\mathcal U}_S
$$
such that
$$
\widehat{\Omega}_M\, :=\, \Omega_M\big\vert_{\widehat{\mathcal U}_M}\,=\,
\widehat{\Phi}^*\widehat{\Omega}_S\, ,
$$
where $\widehat{\Omega}_S\, :=\, {\Omega}''_K\big\vert_{\widehat{\mathcal U}_S}$.

The above mentioned deformation quantization of the symplectic
manifold 
$$
\left((K^\delta_0\cap (K_{X_0})^\delta)/{\mathbb S}_\delta,\, {\Omega}''_K\big\vert_{(K^\delta_0\cap 
(K_{X_0})^\delta)/{\mathbb S}_\delta}\right)
$$
produces a deformation quantization of the symplectic manifold $(\widehat{\mathcal U}_S,\, 
\widehat{\Omega}_S)$. Using the above isomorphism $\widehat{\Phi}$, this produces a deformation quantization of the 
symplectic manifold $(\widehat{\mathcal U}_M,\, \widehat{\Omega}_M)$; see Theorem \ref{thm1}.

\section{Constant symplectic form and its quantization}\label{se2}

\subsection{Deformation quantization}\label{sec2.1}

Let $Y$ be a connected complex manifold. Its holomorphic tangent and cotangent bundles will be denoted
by $TY$ and $T^*Y$ respectively. Assume that $Y$ is equipped with a holomorphic symplectic form $\Theta$.
The closed $2$--form $\Theta$ defines a holomorphic homomorphism
$$\eta\, :\, TY\, \longrightarrow\, T^*Y$$ that sends any $v\, \in\, T_xY$ to $-i_v(\Theta(x))\, \in\, T^*_xY$,
so $\eta(x)(w)(v)\,=\, \Theta(v,\, w)$ for all $v,\, w\, \in\, T_xY$ and $x\, \in\, Y$. This homomorphism
$\eta$ is an isomorphism, because $\Theta$ is nondegenerate. Define
$$
\tau\, :=\, \eta^{-1}\, :\, T^*Y\, \longrightarrow\, TY\, .
$$
For any two holomorphic functions $f_1,\, f_2$ defined on some open subset $U\, \subset\, Y$, define the
holomorphic function on $U$
\begin{equation}\label{s2}
\{f_1,\, f_2\}\, :=\, \Theta(\tau(df_1),\, \tau(df_2))\, .
\end{equation}
We have
\begin{itemize}
\item $\{f_1,\, f_2\}\, =\, -\{f_2,\, f_1\}$,

\item $\{f_1,\, f_2f_3\}\, =\, \{f_1,\, f_2\}\cdot f_3 + \{f_1,\, f_3\}\cdot f_2$, and

\item $\{f_1,\, \{f_2,\, f_3\}\}+ \{f_2,\, \{f_3,\, f_1\}\}+ \{f_3,\, \{f_1,\, f_2\}\}\,=\, 0$.
\end{itemize}
So the operation $\{-,\, -\}$ defines a Poisson structure on $Y$.

The algebra of locally defined holomorphic functions on $Y$ will be denoted by ${\mathcal H}(Y)$. Let
$$
{\mathcal A}(Y)\,\, :=\,\, {\mathcal H}(Y)[[h]]\,\, :=\,\, \left\{\sum_{i=0}^\infty h^if_i\,\,\middle|\,\,
f_i\, \in\, {\mathcal H}(Y)\right\}
$$
be the space of all formal power series.

Consider an associative algebra operation
$$
{\mathcal A}(Y)\times {\mathcal A}(Y) \, \longrightarrow\,{\mathcal A}(Y)\, . 
$$
The image of any pair $(\textbf{f},\, \textbf{g})$, where $\textbf{f}\,=\, \sum_{i=0}^\infty h^if_i$ and
$\textbf{g}\,=\, \sum_{i=0}^\infty h^ig_i$, is an element
\begin{equation}\label{s1}
\textbf{f}\star\textbf{g}\,=\, \sum_{i=0}^\infty h^i \alpha_i\, \in\, {\mathcal A}(Y)\, .
\end{equation}
A \textit{deformation quantization} of the Poisson manifold $(Y,\, \{-,\, -\})$ is an associative algebra operation 
``$\star$'' as in \eqref{s1} satisfying the following conditions:
\begin{enumerate}
\item each $\alpha_i$ in \eqref{s1} is some polynomial in the derivatives, of arbitrary order, of
$\{f_i\}_{i\geq 0}$ and $\{g_i\}_{i\geq 0}$ (it should be emphasized that the polynomial $\alpha_i$
itself is independent of $\textbf{f}$ and $\textbf{g}$),

\item $\alpha_0\,=\, f_0g_0$,

\item $1\star t \,=\, t\,=\, t\star 1$ for all $t\, \in\, {\mathcal H}(Y)$, and

\item $\textbf{f}\star\textbf{g}- \textbf{g}\star\textbf{f}\,=\, \sqrt{-1}h\{f_0,\, g_0\}+h^2\gamma$, where
$\gamma\, \in\, {\mathcal A}(Y)$ (it depends on $\textbf{f},\, \textbf{g}$).
\end{enumerate}
(See \cite{BFFLS}, \cite{DWL}, \cite{Fe}, \cite{We} for more details.)

\subsection{Moyal--Weyl deformation quantization}\label{sec2.2}

We will now recall an explicit deformation quantization of a constant symplectic form on a vector space.

Let $V$ be a complex vector space of even dimension, say $2n$. Fix a constant symplectic
form
$$
\Theta_0\, \in\, \bigwedge\nolimits^2 V^*
$$
on $V$; in other words, $\Theta_0$ is nondegenerate. Following the notation of Section \ref{sec2.1},
the space of all locally defined holomorphic functions on $V$ (respectively, $V\times V$) is denoted by
${\mathcal H}(V)$ (respectively, ${\mathcal H}(V\times V)$). The form
$\Theta_0$ defines a Poisson structure on ${\mathcal H}(V)$; see \eqref{s2}. As before, the Poisson structure
will be denoted by $\{-,\, -\}$. Define as before
$$
{\mathcal A}(V)\,:=\, {\mathcal H}(V)[[h]]\,\, :=\,\, \left\{\sum_{i=0}^\infty h^if_i\,\, \middle|\,\,
f_i\, \in\, {\mathcal H}(V)\right\}.
$$

For any $f_1,\, f_2\, \in\, {\mathcal H}(V)$, the element of
${\mathcal H}(V\times V)$ defined by $(v,\, w)\, \longmapsto\, f_1(v)\cdot f_2(w)$, where
$v,\, w\, \in\, V$, will be denoted by $f_1\otimes f_2$. Let
$$
\Delta\,:\, V\, \longrightarrow\, V\times V\, , \ \ v\, \longmapsto\, (v,\, v)
$$
be the diagonal embedding. It defines a homomorphism
$$
\Delta^*\,:\, {\mathcal H}(V\times V)\, \longrightarrow\,{\mathcal H}(V)\, ,\ \ \Delta^* (f)(v)\, =\,
f(v,\, v)\, .
$$ 

There is a unique differential operator with constant coefficients
\begin{equation}\label{d}
D\ :\, {\mathcal H}(V\times V)\, \longrightarrow\,{\mathcal H}(V\times V)
\end{equation}
that satisfies the following condition: for any $f_1,\, f_2\, \in\, {\mathcal H}(V)$,
$$
\{f_1,\, f_2\}\,=\, \Delta^* D(f_1\otimes f_2)
$$
(see \cite{Fe}, \cite{We}).

The \textit{Moyal--Weyl deformation quantization} of the Poisson manifold $({\mathcal H}(V),\, \{-,\, -\})$ is defined
by the following conditions:
\begin{enumerate}
\item for all $f_1,\, f_2\, \in\, {\mathcal H}(V)$,
\begin{equation}\label{d2}
f_1\star f_2\,=\, \Delta^* \exp(\sqrt{-1}hD/2)(f_1\otimes f_2)\, \in\, {\mathcal A}(Y)\, ,
\end{equation}
where $D$ is defined in \eqref{d}, and

\item extend the above multiplication operation ``$\star$'' to ${\mathcal A}(V)$ using the bilinearity
condition with respect to the formal parameter $h$. In other words, for
$\textbf{f},\, \textbf{g}\, \in\, {\mathcal A}(V)$ with $\textbf{f}\,=\, \sum_{i=0}^\infty h^if_i$ and
$\textbf{g}\,=\, \sum_{i=0}^\infty h^ig_i$, define
$$
\textbf{f}\star\textbf{g}\,=\, \sum_{i=0}^\infty h^i \left(\sum_{j=0}^i f_j\star g_{i-j}\right)\, \in\, {\mathcal A}(V)\, .
$$
\end{enumerate}
This multiplication operation ``$\star$'' is a deformation quantization of the Poisson manifold
$({\mathcal H}(V),\, \{-,\, -\})$ (see \cite{We}, \cite{Fe} and references therein). It is known as
the \textit{Moyal--Weyl deformation quantization}.

Take any $v_0\, \in\, V$. Define the translation map
$$
{\mathcal T}_0\, :\, V\, \longrightarrow\, V\, , \ \ v\, \longmapsto\, v+v_0\, .
$$
Let
$$
{\mathcal T}\, :\, {\mathcal A}(V)\, \longrightarrow\, {\mathcal A}(V)
$$
be the automorphism that sends any $\textbf{f}\,=\, \sum_{i=0}^\infty h^if_i$ to
$\sum_{i=0}^\infty h^i(f_i\circ {\mathcal T}_0)\, \in\, {\mathcal A}(V)$.

\begin{lemma}\label{lem1}
For any $\textbf{f},\, \textbf{g}\, \in\, {\mathcal A}(V)$,
$$
{\mathcal T}(\textbf{f}\star\textbf{g})\,=\, {\mathcal T}(\textbf{f})\star {\mathcal T}(\textbf{g})\, .
$$
\end{lemma}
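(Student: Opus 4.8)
The plan is to reduce the statement to $h$--independent functions and then exploit the fact that the Moyal--Weyl product is built out of the constant--coefficient operator $D$, which commutes with translations. Since both ``$\star$'' and $\mathcal{T}$ are defined coefficientwise and extended bilinearly in the formal parameter $h$, it suffices to prove that $\mathcal{T}(f_1\star f_2)\,=\, \mathcal{T}(f_1)\star \mathcal{T}(f_2)$ for all $f_1,\, f_2\, \in\, {\mathcal H}(V)$; the general case then follows by collecting powers of $h$. Here $\mathcal{T}$ restricted to ${\mathcal H}(V)$ is just the pullback ${\mathcal T}_0^*\, :\, f\, \longmapsto\, f\circ {\mathcal T}_0$, and on a power series it acts as ${\mathcal T}_0^*$ applied to each coefficient.

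First I would introduce the product translation
$$
{\mathcal T}_0\times {\mathcal T}_0\, :\, V\times V\, \longrightarrow\, V\times V\, ,\ \ (v,\, w)\, \longmapsto\, (v+v_0,\, w+v_0)\, ,
$$
and record three compatibility properties. (i) From the definition of $f_1\otimes f_2$ one has $({\mathcal T}_0\times {\mathcal T}_0)^*(f_1\otimes f_2)\,=\, (f_1\circ {\mathcal T}_0)\otimes (f_2\circ {\mathcal T}_0)$, so pullback by the product translation intertwines $\otimes$ with $\mathcal{T}$. (ii) Since $\Delta\circ {\mathcal T}_0\,=\, ({\mathcal T}_0\times {\mathcal T}_0)\circ \Delta$ (both send $v$ to $(v+v_0,\, v+v_0)$), taking pullbacks gives ${\mathcal T}_0^*\circ \Delta^*\,=\, \Delta^*\circ ({\mathcal T}_0\times {\mathcal T}_0)^*$. (iii) The operator $D$ in \eqref{d} has constant coefficients and ${\mathcal T}_0\times {\mathcal T}_0$ is a translation, so $({\mathcal T}_0\times {\mathcal T}_0)^*\circ D\,=\, D\circ ({\mathcal T}_0\times {\mathcal T}_0)^*$; applied coefficientwise, $({\mathcal T}_0\times {\mathcal T}_0)^*$ therefore commutes with $\exp(\sqrt{-1}hD/2)$.

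With these in hand the proof is a short chain of substitutions in the definition \eqref{d2}. Writing $\mathcal{T}(f_1\star f_2)\,=\, {\mathcal T}_0^*\,\Delta^* \exp(\sqrt{-1}hD/2)(f_1\otimes f_2)$, I would move ${\mathcal T}_0^*$ past $\Delta^*$ using (ii), then past $\exp(\sqrt{-1}hD/2)$ using (iii), and finally apply (i) to rewrite $({\mathcal T}_0\times {\mathcal T}_0)^*(f_1\otimes f_2)$ as $\mathcal{T}(f_1)\otimes \mathcal{T}(f_2)$. This yields
$$
\mathcal{T}(f_1\star f_2)\,=\, \Delta^* \exp(\sqrt{-1}hD/2)\big(\mathcal{T}(f_1)\otimes \mathcal{T}(f_2)\big)\,=\, \mathcal{T}(f_1)\star \mathcal{T}(f_2)\, ,
$$
as required.

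The only point needing care is property (iii): one must check that a constant--coefficient differential operator commutes with pullback by a translation. This is where the constancy of the coefficients of $D$ --- which ultimately reflects $\Theta_0$ being a \emph{constant} symplectic form --- is essential; for a non--constant symplectic form the associated operator would have variable coefficients and the invariance would fail. Everything else is formal. I would verify (iii) by noting that for a constant vector field $\partial$ on $V\times V$ one has $\partial(f\circ ({\mathcal T}_0\times {\mathcal T}_0))\,=\, (\partial f)\circ ({\mathcal T}_0\times {\mathcal T}_0)$, and then extending this multiplicatively to the monomials in constant--coefficient derivatives making up $D$, hence to each power $D^k$ and to $\exp(\sqrt{-1}hD/2)$.
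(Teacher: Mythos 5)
Your proof is correct and is essentially the paper's own argument, spelled out in more detail: the paper likewise reduces to the identity $(f_1\circ {\mathcal T}_0)\star (f_2\circ {\mathcal T}_0)\,=\, {\mathcal T}(f_1\star f_2)$ for $f_1,\, f_2\, \in\, {\mathcal H}(V)$ and deduces it from the fact that $D$ has constant coefficients. Your explicit verification of the intertwining properties (i)--(iii) simply fills in the steps the paper leaves implicit.
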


\begin{proof}
Recall that the differential operator $D$ in \eqref{d} has constant coefficients. From this
it follows immediately that
$$
(f_1\circ {\mathcal T}_0)\star (f_2\circ {\mathcal T}_0)\,=\, {\mathcal T}(f_1\star f_2)
$$
for all $f_1,\, f_2\, \in\, {\mathcal H}(V)$ (see \eqref{d2}). The lemma follows from this.
\end{proof}

It may be mentioned that the Moyal--Weyl deformation quantization has the important property that it is invariant
under the action of the symplectic group ${\rm Sp}(V,\,\Theta_0)\, \subset\, {\rm GL}(V)$ on $V$, however,
we won't need it here.

\section{One-form on Riemann surfaces}\label{se3}

Let $X$ be a compact connected Riemann surface of genus $g$, with $g\, \geq\,1$.
The holomorphic cotangent bundle of $X$ will be denoted by $K_X$.
Fix a nonzero holomorphic $1$-form
\begin{equation}\label{e1}
\beta\,\,\in\,\, H^0(X,\, K_X)\setminus\{0\}
\end{equation}
Consider the zero-set
$$
Z_\beta\, :=\, \left\{x\, \in\, X\,\,\middle|\,\, \beta(z)\,=\, 0\right\}\, \subset\, X\, .
$$
It's cardinality is at most $\text{degree}(K_X)\,=\, 2g-2$, and $Z_\beta\,=\, \emptyset$ if $g\,=\, 1$. Let
\begin{equation}\label{e2}
X_0\,\, :=\,\, X\setminus Z_\beta
\end{equation}
be the complement.

Take any simply connected open subset $U\, \subset\ X_0$. There is a holomorphic function
$f_U$ on $U$ such that $df_U\,=\, \beta\big\vert_U$. This condition uniquely determines $f_U$
up to an additive constant, meaning for any $\widetilde{f}_U$ with $d\widetilde{f}_U\,=\, \beta\big\vert_U$,
there is a constant $c\, \in\, \mathbb C$ such that
\begin{equation}\label{t1}
\widetilde{f}_U\,=\, f_U+c \,.
\end{equation}
Since $\beta$ does not vanish on $U$, the above function $f_U$ is an immersion. So for every $x\, \in\, U$, there is 
an open neighborhood $x\, \in\, U_x\, \subset\, U$ such that the restriction $\widetilde{f}_U\big\vert_{U_x}$ is an 
embedding. Hence $\widetilde{f}_U$ is a holomorphic coordinate function on $U_x$. As $x$ runs over points on $X_0$, 
we get a holomorphic coordinate atlas on $X_0$ satisfying the condition that all the transition functions are 
translations of $\mathbb C$. Such a holomorphic coordinate atlas defines a \textit{translation structure} on $X_0$. 
So $\beta$ defines a translation structure on $X_0$ (see \cite{EMZ} and references therein for translation 
structure). In fact, $\beta$ defines a branched translation structure on entire $X$ (see \cite{LF}, \cite{Ka}, 
\cite{BJJP}, \cite{De}); see \cite{BD} for general branched structures.

On ${\mathbb C}^2$ we have the standard constant symplectic form
\begin{equation}\label{e3}
\omega_0\, :=\, (dz_2)\wedge (dz_1)\, .
\end{equation}
Identify ${\mathbb C}^2$ with the total space of the holomorphic cotangent bundle $T^*{\mathbb C}$
by sending $u\cdot dz\, \in\, T^*_c{\mathbb C}$ to $(c,\, u)\,\in\, {\mathbb C}^2$. Under this
identification of ${\mathbb C}^2$ with $T^*{\mathbb C}$, the form $\omega_0$ in \eqref{e3} gets
identified with the Liouville symplectic form on $T^*{\mathbb C}$.

Take a pair $(U,\, f_U)$, where $U$ is a connected open subset of $X_0$ and
$f_U\, :\, U\, \longrightarrow\, {\mathbb C}$ is a holomorphic embedding with $df_U\,=\, \beta\big\vert_U$. Let
\begin{equation}\label{e4}
F\,:\, K_U\,=\, K_X\big\vert_U \,=\, T^*U\, \longrightarrow\, T^*{\mathbb C}\,=\, {\mathbb C}^2
\end{equation}
be the holomorphic embedding given by the differential of $f_U$, more precisely, $$F(u,\, \lambda \cdot \beta(u))
\,=\, (f_U(u),\, \lambda)$$ for all $u\, \in\, U$ and $\lambda\, \in\, \mathbb C$.

\begin{lemma}\label{lem2}
The pullback $F^*\omega_0$ of the form $\omega_0$ in \eqref{e3} coincides with the Liouville symplectic
form on $K_U\,=\, T^*U$.
\end{lemma}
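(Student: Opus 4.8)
The plan is to reduce the statement to the classical fact that the cotangent lift of a biholomorphism preserves the tautological (Liouville) $1$-form, together with the observation that the Liouville symplectic form is the exterior derivative of that $1$-form. First I would set up notation: let $\pi_U\, :\, T^*U\, \longrightarrow\, U$ be the projection and let $\theta_U$ be the tautological $1$-form on $T^*U$, characterized by $\theta_U\vert_\alpha(v)\,=\, \alpha(d\pi_U(v))$ for $\alpha\, \in\, T^*_uU$ and $v\, \in\, T_\alpha(T^*U)$; the Liouville symplectic form on $T^*U$ is then $d\theta_U$. Likewise let $\theta$ be the tautological $1$-form on $T^*{\mathbb C}$, so that under the identification $T^*{\mathbb C}\,=\, {\mathbb C}^2$ described just before \eqref{e3} one has $\theta\,=\, z_2\,dz_1$ and hence $\omega_0\,=\, dz_2\wedge dz_1\,=\, d\theta$, consistent with the stated identification.

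The key step is to verify $F^*\theta\,=\, \theta_U$. I would do this by recognizing $F$ as precisely the cotangent lift of the biholomorphism $f_U\, :\, U\, \longrightarrow\, f_U(U)$. Indeed, since $df_U\,=\, \beta\vert_U$, the formula \eqref{e4} sends the covector $\lambda\cdot\beta(u)\, \in\, T^*_uU$ to the covector $\lambda\cdot dz\, \in\, T^*_{f_U(u)}{\mathbb C}$, which is exactly the image of $\lambda\cdot\beta(u)$ under $((df_U)^{-1})^*$; this is the defining formula for the cotangent lift. Since the cotangent lift of a local biholomorphism intertwines the two tautological $1$-forms, this yields $F^*\theta\,=\, \theta_U$ at once.

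Alternatively --- and this is the version I would actually write out, as it avoids quoting the cotangent-lift lemma --- I would use that $f_U$ is itself a holomorphic coordinate on $U$. Setting $t\,:=\, f_U$, one has $\beta\vert_U\,=\, dt$, so the frame $\beta$ identifies the point $\lambda\cdot\beta(u)\, \in\, T^*_uU$ with $\lambda\cdot dt$; thus $(t,\, \lambda)$ are Darboux coordinates on $T^*U$ in which $\theta_U\,=\, \lambda\,dt$ and the Liouville form is $d\lambda\wedge dt$. In these coordinates the defining formula $F(u,\, \lambda\cdot\beta(u))\,=\, (f_U(u),\, \lambda)$ reads $(t,\, \lambda)\, \longmapsto\, (z_1,\, z_2)\,=\, (t,\, \lambda)$, i.e. $F$ becomes the identity map. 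Hence $F^*\omega_0\,=\, F^*(dz_2\wedge dz_1)\,=\, d\lambda\wedge dt$, which is precisely the Liouville form on $T^*U$.

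Finally I would conclude by noting that pullback commutes with exterior differentiation, so $F^*\omega_0\,=\, F^*d\theta\,=\, d(F^*\theta)\,=\, d\theta_U$, the Liouville symplectic form on $K_U\,=\, T^*U$. The only point requiring genuine care is the bookkeeping of conventions: matching the ordering $dz_2\wedge dz_1$ in \eqref{e3} with the sign convention $\omega\,=\, d\theta$ for the Liouville form, and checking that the fiber coordinate $\lambda$ relative to the frame $\beta$ agrees with the momentum coordinate conjugate to $t\,=\, f_U$. Once $f_U$ is taken as the base coordinate, this bookkeeping is routine and the computation is essentially forced, so I expect no substantive obstacle beyond keeping the signs straight.
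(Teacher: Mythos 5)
Your proof is correct and takes essentially the same route as the paper: the paper's one-sentence proof simply invokes the fact that the identification of ${\mathbb C}^2$ with $T^*{\mathbb C}$ carries $\omega_0$ to the Liouville form, leaving implicit the naturality of the tautological $1$-form under the cotangent lift $F$ of $f_U$, which is precisely the step you spell out. Your explicit check in the Darboux coordinates $(t,\,\lambda)$ is a more detailed rendering of the same argument, not a different one.
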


\begin{proof}
This follows from the fact that the earlier mentioned identification
of ${\mathbb C}^2$ with $T^*{\mathbb C}$ takes the form $\omega_0$
to the Liouville symplectic form on $T^*{\mathbb C}$.
\end{proof}

Consider the Moyal--Weyl deformation quantization of the symplectic manifold $$(T^*{\mathbb C},\, \omega_0)\,=\,
({\mathbb C}^2,\, \omega_0)\, ,$$
where $\omega_0$ is the constant form in \eqref{e3}. In view
of Lemma \ref{lem2}, using the holomorphic embedding $F$ in \eqref{e4}, the
Moyal--Weyl deformation quantization of $({\mathbb C}^2,\, \omega_0)$
produces a deformation quantization of the symplectic manifold $T^*U\,=\, K_U$ equipped with the Liouville symplectic form.
To describe this deformation quantization explicitly, let
$$
F^{-1}\, :\, F(K_U)\, \longrightarrow\, K_U
$$
be the inverse of $F$ on the image of $F$. For
$\textbf{f},\, \textbf{g}\, \in\, {\mathcal A}(K_U)$ with $\textbf{f}\,=\, \sum_{i=0}^\infty h^if_i$ and
$\textbf{g}\,=\, \sum_{i=0}^\infty h^ig_i$, where $f_i,\, g_i\, \in\, {\mathcal H}(K_U)$, if
$$
\left(\sum_{i=0}^\infty h^i(f_i\circ F^{-1})\right)\star \left(\sum_{i=0}^\infty h^i(g_i\circ F^{-1})\right)\,=\,
\sum_{i=0}^\infty h^i b_i\, ,
$$
where $\star$ is the Moyal--Weyl deformation quantization and $b_i\, \in\, {\mathcal H}(F(K_U))$, then define
\begin{equation}\label{dq}
\textbf{f}\star\textbf{g}\,=\, \sum_{i=0}^\infty h^i (b_i\circ F)\,.
\end{equation}
Using Lemma \ref{lem2} we conclude that
this defines a deformation quantization of the symplectic manifold $T^*U$ equipped with the Liouville symplectic form.

The holomorphic coordinate function $f_U$ is not uniquely determined by $\beta$. However, any two choices of the 
holomorphic coordinate function are related by \eqref{t1}. Using Lemma \ref{lem1} and \eqref{t1} we conclude that 
the deformation quantization in \eqref{dq}, of the symplectic manifold $K_U$, equipped with the Liouville symplectic 
form, is actually independent of the choice the function $f_U$. Consequently, the locally defined deformation 
quantizations of the symplectic manifold $T^*X_0\,=\, K_{X_0}$ equipped with the Liouville symplectic form patch 
together compatibly to define a deformation quantization of the symplectic manifold $K_{X_0}$ equipped with the 
Liouville symplectic form.

we summarize the above construction in the following proposition:

\begin{proposition}\label{prop1}
Given a nonzero holomorphic $1$-form $\beta$ on $X$, the symplectic manifold $T^*X_0\,=\, K_{X_0}$ equipped with the 
Liouville symplectic form has a natural deformation quantization. It is locally given by the Moyal--Weyl deformation 
quantization of the symplectic manifold $(T^*{\mathbb C},\, \omega_0)$, where $\omega_0$ is the constant form in 
\eqref{e3}. These locally defined deformation quantizations patch together compatibly to define a global deformation 
quantization.
\end{proposition}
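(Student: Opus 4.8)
The plan is to assemble Proposition~\ref{prop1} from the local construction already laid out in the text, so the work is really to verify that the local deformation quantizations glue. First I would fix, for each point $x\in X_0$, a pair $(U,\,f_U)$ with $U$ a connected simply connected neighborhood of $x$ and $f_U$ a holomorphic embedding satisfying $df_U=\beta\big\vert_U$, which exists because $\beta$ is nonvanishing on $X_0$. Via the embedding $F$ of \eqref{e4} and Lemma~\ref{lem2}, pulling back the Moyal--Weyl product on $(\mathbb{C}^2,\,\omega_0)$ gives, as in \eqref{dq}, a deformation quantization $\star_U$ of $K_U=T^*U$ with its Liouville form; by the general properties recalled in Section~\ref{sec2.2} this $\star_U$ is associative, satisfies $1\star_U t=t=t\star_U 1$, and reproduces the Liouville Poisson bracket in its $h^1$ term, so it is a bona fide deformation quantization on $K_U$.

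The essential point, and the one I would treat as the main obstacle, is independence of the choices and compatibility on overlaps. For a single $U$ the coordinate $f_U$ is determined by $\beta$ only up to an additive constant by \eqref{t1}, so I would first show that replacing $f_U$ by $\widetilde{f}_U=f_U+c$ leaves $\star_U$ unchanged. The replacement changes $F$ by precomposition (equivalently postcomposition on $\mathbb{C}^2$) with the translation $\mathcal{T}_0:\,(z_1,z_2)\mapsto(z_1+c,\,z_2)$ of $\mathbb{C}^2$, and Lemma~\ref{lem1} says exactly that the Moyal--Weyl product intertwines this translation. Chasing the definition \eqref{dq} through this translation, the factors of $\mathcal{T}$ coming from $F^{-1}$ and those reabsorbed by the final composition with $F$ cancel, so the resulting product on $K_U$ is genuinely independent of the constant $c$; this is where I expect the bookkeeping to require care, since one must track the translation acting on the $\mathbb{C}^2$ side versus the two composites $F,\,F^{-1}$.

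Once independence within a chart is established, overlaps follow with no extra input. Given two charts $(U,\,f_U)$ and $(U',\,f_{U'})$ and a connected component $W$ of $U\cap U'$, both $f_U\big\vert_W$ and $f_{U'}\big\vert_W$ are primitives of $\beta\big\vert_W$, hence differ by a constant by \eqref{t1}; by the previous paragraph the two products $\star_U$ and $\star_{U'}$ therefore agree on $K_W$. Thus the locally defined products agree on all intersections and patch to a single associative operation $\star$ on the sheaf $\mathcal{A}(K_{X_0})$. I would then note that the defining conditions (1)--(4) of Section~\ref{sec2.1} are local on $K_{X_0}$ and hold in each chart, so they hold for the patched product; since the Liouville form restricts on each $K_U$ to $F^*\omega_0$ by Lemma~\ref{lem2}, the $h^1$ term of $\star$ is the Liouville Poisson bracket globally. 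This yields the asserted global deformation quantization of $(K_{X_0},\,\text{Liouville})$, locally given by Moyal--Weyl, completing the proof.
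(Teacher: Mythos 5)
Your proposal follows essentially the same route as the paper: construct the local star product by transporting Moyal--Weyl through the embedding $F$ of \eqref{e4} using Lemma \ref{lem2}, show independence of the additive constant in $f_U$ via the translation-invariance of Lemma \ref{lem1}, and patch on overlaps since any two primitives of $\beta$ differ by a constant. The argument is correct and matches the paper's construction, including the correct identification of the relevant translation of $\mathbb{C}^2$ as $(z_1,z_2)\mapsto(z_1+c,\,z_2)$.
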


\section{Moduli space of Higgs bundles and deformation quantization}

\subsection{A description of moduli spaces of Higgs bundles}

A Higgs bundle on $X$ is a holomorphic vector bundle $E$ on $X$ together with a holomorphic section
$$\theta\, \in\, H^0(X,\, \text{End}(E)\otimes K_X)$$ \cite{Hi}, \cite{Si1}. A Higgs bundle $(E,\, \theta)$
is called \textit{semistable} if
$$
\frac{{\rm degree}(F)}{{\rm rank}(F)}\, \leq\, \frac{{\rm degree}(E)}{{\rm rank}(E)}
$$
for all holomorphic subbundles $0\, \not=\,F\, \subset\, E$ with $\theta(F)\, \subset\, F\otimes K_X$. If
$$
\frac{{\rm degree}(F)}{{\rm rank}(F)}\, <\, \frac{{\rm degree}(E)}{{\rm rank}(E)}
$$
for all holomorphic subbundles $0\, \not=\, F\, \subsetneq\, E$ with $\theta(F)\, \subset\, F\otimes K_X$,
then $(E,\, \theta)$ is called \textit{stable}.

Fix an integer $r\, \geq\, 1$. Let $\mathcal{M}_X(r)$ denote the moduli space of semistable Higgs bundles on
$X$ of rank $r$ and degree $d\, =\, r(g-1)+1$, where $g$ as before is
the genus of $X$ (see \cite{Hi}, \cite{Ni}, \cite{Si2} for the construction of
this moduli space). Since $r$ and $d$ are coprime, any $(E,\, \theta)\, \in\, \mathcal{M}_X(r)$ is in fact stable.
Consequently, $\mathcal{M}_X(r)$ is an irreducible smooth quasiprojective variety defined over $\mathbb C$.
Its dimension is
\begin{equation}\label{delta}
2\delta\,:=\, 2(r^2(g-1)+1)\, .
\end{equation}
Moreover, it has a natural algebraic symplectic structure \cite{Hi}.

Consider the Zariski open subset of the Cartesian product
\begin{equation}\label{kd}
K^{\delta}_0\, :=\, \left\{(x_1,\, \cdots,\, x_\delta)\, \in\, (K_X)^\delta\,\, \middle|\,\, x_i\, \not=\, x_j\ \ 
\forall\,\ i\, \not=\, j\right\}
\, \subset\, (K_X)^\delta
\end{equation}
given by the locus of all distinct $\delta$ points, where
$\delta$ is the integer in \eqref{delta}. The symmetric group ${\mathbb S}_\delta$ of permutations of 
$\{1,\, \cdots, \, \delta\}$ acts freely on $K^{\delta}_0$. So the quotient $K^{\delta}_0/{\mathbb S}_\delta$ is an 
irreducible smooth complex quasiprojective variety of dimension $2\delta$.

Let
\begin{equation}\label{e7}
p\, :\, K_X \, \longrightarrow\, X
\end{equation}
be the natural projection.

There is a natural nonempty Zariski open subset 
\begin{equation}\label{e5}
{\mathcal U}_M\, \subset\, \mathcal{M}_X(r)
\end{equation}
and a natural nonempty Zariski open subset
\begin{equation}\label{e6}
{\mathcal U}_S\, \subset\, K^{\delta}_0/{\mathbb S}_\delta
\end{equation}
such that there is a canonical algebraic isomorphism
\begin{equation}\label{Ph}
\Phi\, :\, {\mathcal U}_M \, \stackrel{\sim}{\longrightarrow}\, {\mathcal U}_S
\end{equation}
\cite{GNR}, \cite{ER}, \cite{BM}, \cite{Hu}. We will briefly recall it below.

Take any $(E,\, \theta)\, \in\, \mathcal{M}_X(r)$. Since $\text{degree}(E)\,=\, d\,=\, {\rm rank}(E)(g-1)+1$, from
Riemann--Roch theorem we know that $\dim H^0(X,\, E) - \dim H^1(X,\, E)\,=\, 1$. There is a nonempty Zariski
open subset
\begin{equation}\label{u0}
U_0\, \subset\, \mathcal{M}_X(r)
\end{equation}
such that $\dim H^0(X,\, F)\,=\, 1$ for all $(F,\, \theta)\, \in\, U_0$.
Assume that $(E,\, \theta)\, \in\, U_0$. For $(E,\, \theta)$, we have
\begin{itemize}
\item a $1$-dimensional closed subscheme $C\, \subset\, K_X$ such that the projection 
$$
\phi\, :=\, p\big\vert_{C} \, :\, C\, \longrightarrow\, X
$$
(see \eqref{e7} for the map $p$) is a finite map, and a 

\item a torsionfree coherent sheaf ${\mathbb L}\, \longrightarrow\, C$ of rank one,
\end{itemize}
satisfying the condition that $E\,=\, \phi_*{\mathbb L}$ \cite{Hi}. The Higgs field $\theta$ on $E$
is constructed from $(C,\, {\mathbb L})$ as follows. Let
$$
\sigma\, \in\, H^0(K_X,\, p^*K_X)
$$
be the tautological section, where $p$ is the projection in \eqref{e7}. Tensoring with it produces a homomorphism
$$
{\mathbb L} \, \xrightarrow{\, \ -\otimes \sigma\ \,}\, {\mathbb L}\otimes p^*K_X
$$
Taking the direct image and invoking the projection formula, we get a homomorphism
$$
E\,=\, \phi_*{\mathbb L} \, \xrightarrow{\ \phi_*(-\otimes \sigma)\ }\, \phi_*({\mathbb L}\otimes p^*K_X)
\,=\, (\phi_*{\mathbb L})\otimes K_X\,=\, E\otimes K_X\, .
$$
This homomorphism coincides with $\theta$ \cite{Hi}, \cite{Si2}.

Since $E\,=\, \phi_*{\mathbb L}$, and $\phi$ is a finite map, we have
$$
H^i(X,\, E)\,=\, H^i(C,\, {\mathbb L})
$$
for all $i$. In particular, we have $H^0(X,\, E)\,=\, H^0(C,\, {\mathbb L})$. So
$$
\dim H^0(C,\, {\mathbb L})\,=\, \dim H^0(X,\, E)\,=\, 1\, .
$$
Take a nonzero section $s\, \in\, H^0(C,\, {\mathbb L})\setminus\{0\}$. Let $D_s\,=\, \text{div}(s)$ denote
its divisor; since $\dim H^0(C,\, {\mathbb L})\,=\,1$, we know that $D_s$ is
actually independent of the choice of $s$.

There is a nonempty Zariski open subset
$$
{\mathcal U}_M\, \subset\, U_0
$$
(see \eqref{u0} for $U_0$) such that for all $(E,\, \theta)\, \in\, {\mathcal U}_M$, we have
$$
D_s\, \in\, K^{d}_0/{\mathbb S}_\delta
$$
(see \eqref{kd}). The isomorphism $\Phi$ in \eqref{Ph} sends any
$(E,\, \theta)\, \in\, {\mathcal U}_M$ to $D_s\, \in\, K^{d}_0/{\mathbb S}_\delta$. See
\cite{GNR}, \cite{ER} for more details.

The Liouville symplectic form on $K_X$ produces an algebraic symplectic form on the
Cartesian product $(K_X)^\delta$; let
\begin{equation}\label{sr}
\widetilde{\Omega}'_K\, \in\, H^0\left((K_X)^\delta,\, \Omega^2_{(K_X)^\delta}\right)
\end{equation}
be this symplectic form on $(K_X)^\delta$. Let
\begin{equation}\label{srn}
\Omega'_K\, \in\, H^0\left(K^{\delta}_0,\, \Omega^2_{K^{\delta}_0}\right)
\end{equation}
be the restriction of $\widetilde{\Omega}'_K$ to the
Zariski open subset $K^{\delta}_0$ in \eqref{kd}.
This $2$-form $\Omega'_K$ is evidently preserved by the action of the symmetric
group ${\mathbb S}_\delta$ on $K^{\delta}_0$. So $\Omega'_K$ produces an algebraic
symplectic form $\Omega''_K$ on the quotient space $K^{\delta}_0/{\mathbb S}_\delta$. Let
\begin{equation}\label{fo1}
\Omega_S\, \,\in\, \, H^0\left({\mathcal U}_S,\, \Omega^2_{{\mathcal U}_S}\right)
\end{equation}
be the restriction of $\Omega''_K$ to the open subset ${\mathcal U}_S$ in \eqref{e6}.

Let $\Omega'_M$ denote the algebraic symplectic form on the moduli space
$\mathcal{M}_X(r)$. Let
\begin{equation}\label{fo2}
\Omega_M\, \,\in\, \, H^0\left({\mathcal U}_M,\, \Omega^2_{{\mathcal U}_M}\right)
\end{equation}
be its restriction to the open subset ${\mathcal U}_M$ in
\eqref{e5}. For the isomorphism $\Phi$ in \eqref{Ph}, we have
\begin{equation}\label{e8}
\Phi^* \Omega_S\,\,=\,\,\Omega_M\, ,
\end{equation}
where $\Omega_S$ and $\Omega_M$ are defined in \eqref{fo1} and \eqref{fo2} respectively
(see \cite{GNR}, \cite{ER}, \cite{BM}, \cite{Hu}).

\subsection{Deformation quantization of the moduli space}

As in \eqref{e1}, fix a nonzero $1$-form
$$
\beta\,\,\in\,\, H^0(X,\, K_X)\setminus\{0\}\, .
$$

In Proposition \ref{prop1} we constructed a deformation quantization of
the symplectic manifold $T^*X_0\,=\, K_{X_0}$ equipped with the Liouville symplectic form.
Restrict the symplectic form $\widetilde{\Omega}'_K$ in \eqref{sr} to the open subset
$$
(K_{X_0})^\delta \, \subset\, (K_X)^\delta\, .
$$
The deformation quantization of $K_{X_0}$ produces a deformation quantization of this
symplectic manifold $\left((K_{X_0})^\delta,\, \widetilde{\Omega}'_K\big\vert_{(K_{X_0})^\delta}\right)$. From the
construction of this deformation quantization
of the symplectic manifold $\left((K_{X_0})^\delta,\, \widetilde{\Omega}'_K\big\vert_{(K_{X_0})^\delta}\right)$
it is evident that the permutation action of the symmetric
group ${\mathbb S}_\delta$ on $(K_{X_0})^\delta$ preserves this deformation quantization.

The deformation quantization of the symplectic manifold $\left((K_{X_0})^\delta,\, 
\widetilde{\Omega}'_K\big\vert_{(K_{X_0})^\delta}\right)$ in turn restricts to a deformation quantization of the
Zariski open subset $(K_{X_0})^\delta\bigcap K^{\delta}_0\, \subset\, K^{\delta}_0$ equipped with the symplectic
form ${\Omega}'_K\big\vert_{(K_{X_0})^\delta\cap K^{\delta}_0}$, where $K^{\delta}_0$ is constructed in \eqref{kd}
and ${\Omega}'_K$ is the symplectic form in \eqref{srn}. Let
$$
\widehat{K^{\delta}_0/{\mathbb S}_\delta}\, \subset\, K^{\delta}_0/{\mathbb S}_\delta
$$
be the Zariski open subset given by the image of $(K_{X_0})^\delta\bigcap K^{\delta}_0$. From the above observation, 
that the permutation action of the symmetric group ${\mathbb S}_\delta$ on $(K_{X_0})^\delta$ preserves the 
deformation quantization of the symplectic manifold $\left((K_{X_0})^\delta,\, 
\widetilde{\Omega}'_K\big\vert_{(K_{X_0})^\delta}\right)$, it follows immediately that the above deformation quantization 
of the symplectic manifold $$\left((K_{X_0})^\delta\cap K^{\delta}_0,\, {\Omega}'_K\big\vert_{(K_{X_0})^\delta\cap 
K^{\delta}_0}\right)$$ produces a deformation quantization of the above symplectic manifold
$\widehat{K^{\delta}_0/{\mathbb S}_\delta}$ equipped with the symplectic form constructed using
${\Omega}'_K\big\vert_{(K_{X_0})^\delta\cap 
K^{\delta}_0}$. Consequently, we obtain a deformation quantization of the Zariski open subset
$$
\widehat{\mathcal U}_S\, :=\, \mathcal{U}_S\cap \widehat{K^{\delta}_0/{\mathbb S}_\delta}\, \subset\,
\mathcal{U}_S
$$
equipped with the algebraic symplectic form $\widehat{\Omega}_S\,:=\,
{\Omega}_S\big\vert_{\widehat{\mathcal U}_S}$, where ${\Omega}_S$ and ${\mathcal U}_S$
are as in \eqref{fo1} and \eqref{e6} respectively.

Define
$$
\widehat{\mathcal U}_M\, :=\, \Phi^{-1}(\widehat{\mathcal U}_S)\, \subset\, {\mathcal U}_M\, ,
$$
where $\Phi$ is the isomorphism in \eqref{Ph}. Let
\begin{equation}\label{e10}
\widehat{\Phi}\, :=\, \Phi\big\vert_{\widehat{\mathcal U}_M}\, \, :\,\,
\widehat{\mathcal U}_M \, \stackrel{\sim}{\longrightarrow}\, \widehat{\mathcal U}_S
\end{equation}
be the isomorphism obtained by restricting $\Phi$.

The restriction of the symplectic form $\Omega_M$ in \eqref{fo2} to the above Zariski open subset
$\widehat{\mathcal U}_M\, \subset\, {\mathcal U}_M$ will be denoted by $\widehat{\Omega}_M$. From \eqref{e8}
it follows immediately that
\begin{equation}\label{e9}
\widehat{\Phi}^* \widehat{\Omega}_S\,\,=\,\,\widehat{\Omega}_M\, .
\end{equation}

Using $\beta$, we constructed above a deformation quantization of the symplectic manifold $\left(\widehat{\mathcal 
U}_S,\, \widehat{\Omega}_S\right)$. In view of \eqref{e9} this produces a deformation quantization of the symplectic 
manifold $(\widehat{\mathcal U}_M,\, \widehat{\Omega}_M)$. To describe this deformation quantization
explicitly, let
$$
\Psi \, :\, {\mathcal A}(\widehat{\mathcal U}_M)\, \longrightarrow\, {\mathcal A}(\widehat{\mathcal U}_S)
$$
be the isomorphism that sends any $\sum_{i=0}^\infty h^if_i\, \in\, {\mathcal A}(\widehat{\mathcal U}_M)$,
where $f_i\, \in\, {\mathcal H}(\widehat{\mathcal U}_M)$, to
$$\sum_{i=0}^\infty h^i(f_i\circ \widehat{\Phi}^{-1})\, \in\, {\mathcal A}(\widehat{\mathcal U}_S)\, .$$
Now for any $f_1,\, f_2\, \in\, {\mathcal A}(\widehat{\mathcal U}_M)$, define
$$
f_1\star f_2\, :=\, \Psi^{-1}(\Psi(f_1)\star \Psi(f_2))\, .
$$
This evidently defines a deformation quantization of the symplectic
manifold $\left(\widehat{\mathcal U}_M,\, \widehat{\Omega}_M\right)$.

The above construction is summarized in the following theorem:

\begin{theorem}\label{thm1}
Take a nonzero $1$-form $\beta\,\,\in\,\, H^0(X,\, K_X)\setminus\{0\}$. It produces a natural
deformation quantization of the Zariski open subset $\widehat{\mathcal U}_M$ of the moduli space
$\mathcal{M}_X(r)$ of semistable Higgs bundles on $X$ of rank $r$ and degree $r(g-1)+1$.
\end{theorem}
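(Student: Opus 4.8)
The plan is to assemble Theorem \ref{thm1} entirely from the machinery already developed, treating it as the final link in a chain of transported deformation quantizations. The starting point is Proposition \ref{prop1}, which gives a deformation quantization of $K_{X_0}$ with its Liouville form; the goal is to push this all the way to $\widehat{\mathcal U}_M \subset \mathcal{M}_X(r)$. I would organize the argument as three transport steps, each preserving the ``$\star$''-product and the relevant symplectic form, and then invoke the isomorphism \eqref{e10} to land on the moduli space.

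First I would explain how a deformation quantization of $K_{X_0}$ induces one on the Cartesian power $(K_{X_0})^\delta$ equipped with $\widetilde{\Omega}'_K\big\vert_{(K_{X_0})^\delta}$: because the symplectic form on the product is the external sum of the Liouville forms and the Moyal--Weyl product is built from a constant-coefficient bidifferential operator $D$, the product structures in distinct factors commute and one simply takes the (completed) tensor product of the local star-products. The key point to record is that the permutation action of ${\mathbb S}_\delta$ on $(K_{X_0})^\delta$ preserves this quantization, which is immediate since the factors are quantized identically and $D$ is symmetric under interchange of factors. Second, I would restrict this quantization to the ${\mathbb S}_\delta$-invariant open subset $(K_{X_0})^\delta \cap K^\delta_0$, carrying along the symplectic form ${\Omega}'_K\big\vert_{(K_{X_0})^\delta\cap K^\delta_0}$ from \eqref{srn}; restriction to a Zariski open subset is harmless for deformation quantizations since the star-product is local (condition (1) in Section \ref{sec2.1}). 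The ${\mathbb S}_\delta$-invariance then descends the quantization to the quotient $\widehat{K^\delta_0/{\mathbb S}_\delta}$, and further restriction to $\widehat{\mathcal U}_S = \mathcal{U}_S \cap \widehat{K^\delta_0/{\mathbb S}_\delta}$ yields a quantization of $(\widehat{\mathcal U}_S,\, \widehat{\Omega}_S)$.

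The third and final step is the transport across $\widehat{\Phi}$. Here I would use the explicit conjugation by the algebra isomorphism $\Psi$: defining $f_1 \star f_2 := \Psi^{-1}(\Psi(f_1)\star\Psi(f_2))$ gives an associative product on ${\mathcal A}(\widehat{\mathcal U}_M)$, and I would verify that it satisfies conditions (1)--(4) of Section \ref{sec2.1}. Conditions (1)--(3) transport formally because $\widehat{\Phi}$ is an algebraic isomorphism and $\Psi$ is its $h$-linear lift on functions; condition (4), the correspondence with the Poisson bracket to first order in $h$, is exactly where the symplectic compatibility \eqref{e9}, namely $\widehat{\Phi}^*\widehat{\Omega}_S = \widehat{\Omega}_M$, is used, since a symplectomorphism intertwines the associated Poisson brackets.

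The subtle point, and the one I would state most carefully, is the descent of the quantization through the free quotient by ${\mathbb S}_\delta$. The genuine content here is Lemma \ref{lem1}: the translation-invariance of the local Moyal--Weyl product is what guarantees that the locally defined quantizations on $K_{X_0}$ glue (this is already absorbed into Proposition \ref{prop1}), and the analogous symmetry under permutation of factors is what lets invariant local data descend to the quotient. I expect the main obstacle to be phrased rather than computed: one must check that the ${\mathbb S}_\delta$-invariant local star-products, which a priori live on invariant opens upstairs, actually define a product on holomorphic functions downstairs, i.e.\ that the star-product of two invariant functions is again invariant. This is immediate from the symmetry of the construction, but it is the conceptual heart of the theorem; everything else is formal transport along isomorphisms and the localization of bidifferential operators. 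Once these steps are in place, Theorem \ref{thm1} follows by composing the three transports and specializing to $n = \delta$.
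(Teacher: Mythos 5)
Your proposal matches the paper's own construction essentially step for step: the paper likewise builds the quantization of $(K_{X_0})^\delta$ from Proposition \ref{prop1}, notes its ${\mathbb S}_\delta$-invariance, restricts to $(K_{X_0})^\delta\cap K^\delta_0$, descends to the quotient and to $\widehat{\mathcal U}_S$, and finally transports to $\widehat{\mathcal U}_M$ by conjugating with the isomorphism $\Psi$ induced by $\widehat{\Phi}$, using \eqref{e9} for the Poisson-bracket compatibility. Your added remarks on locality of the bidifferential operators and invariance of star-products of invariant functions are correct elaborations of points the paper treats as immediate.
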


\section*{Data Availability}

Data sharing not applicable --- no new data generated.


\end{document}